\newcommand{\bq}{{\bf q}}
\newcommand{\bn}{{\bf n}}
\newcommand{\bx}{{\bf x}}
\newcommand{\pT}{{\partial T}}
\def\bbQ{\mathbb{Q}}
\def\T{{\mathcal T}}
\def\E{{\mathcal E}}
\def\l{{\langle}}
\def\r{{\rangle}}
\def\3bar{{|\hspace{-.02in}|\hspace{-.02in}|}}
\newtheorem{algorithm}{Weak Galerkin Algorithm}
\title{A conforming DG method for the biharmonic equation on polytopal meshes}
\author{Xiu Ye\thanks{Department of
Mathematics, University of Arkansas at Little Rock, Little Rock, AR
72204 (xxye@ualr.edu). This research was supported in part by
National Science Foundation Grant DMS-1620016.}
\and
Shangyou Zhang\thanks{Department of
Mathematical Sciences, University of Delaware, Newark, DE 19716 (szhang@udel.edu).}
}
\begin{document}

\maketitle

\begin{abstract}
A conforming discontinuous Galerkin finite element method is introduced for solving the biharmonic equation.
This method, by its name, uses discontinuous approximations and keeps simple formulation of the conforming finite element method at the same time. The ultra simple formulation of the method will reduce programming complexity in practice.
Optimal order error estimates in a discrete $H^2$ norm is established for the
corresponding finite element solutions. Error estimates in the $L^2$ norm are also derived with a sub-optimal order of
convergence for the lowest order element and an optimal order of
convergence for all high order of elements. Numerical results are
presented to confirm the theory of convergence.
\end{abstract}

\begin{keywords}
 finite element methods, weak Laplacian,
biharmonic equations, polyhedral meshes
\end{keywords}

\begin{AMS}
Primary, 65N15, 65N30, 76D07; Secondary, 35B45, 35J50
\end{AMS}
\pagestyle{myheadings}

\section{Introduction}

We consider the biharmonic equation of the form
\begin{eqnarray}
\Delta^2 u&=&f\quad \mbox{in}\;\Omega,\label{pde}\\
u&=&0\quad\mbox{on}\;\partial\Omega,\label{bc-d}\\
\frac{\partial u}{\partial
n}&=&0\quad\mbox{on}\;\partial\Omega,\label{bc-n}
\end{eqnarray}
where $\Omega$ is a bounded polytopal domain in $\mathbb{R}^d$.

The weak formulation of the boundary value problem (\ref{bc-d}) and (\ref{bc-n}) is  seeking $u\in H^2_0(\Omega)$ satisfying
\begin{equation}\label{wf}
(\Delta u, \Delta v) = (f, v)\qquad \forall v\in H_0^2(\Omega).
\end{equation}

The $H^2$ conforming finite element method for the problem (\ref{pde})-(\ref{bc-n}) keeps the same simple form as in (\ref{wf}): find $u_h\in V_h\subset H^2_0(\Omega)$
such that
\begin{eqnarray}
(\Delta u_h,\Delta v)=(f,v)\quad \forall v\in V_h.\label{cfe}
\end{eqnarray}
However, it is known that $H^2$-conforming
methods  require $C^1$-continuous piecewise polynomials
on a simplicial meshes, which imposes
difficulty in practical computation. Due to the complexity in the
construction of $C^1$-continuous elements, $H^2$-conforming finite
element methods are rarely used in practice for solving the
biharmonic equation.

An approach of avoiding construction of $H^2$-conforming elements is to use discontinuous approximations. Due to the flexibility of discontinuous Galerkin (DG) finite element methods in element constructions and in mesh generations, many finite element methods have been developed using totally discontinuous polynomials. Here we are only interested in interior penalty discontinuous Galerkin (IPDG)  methods since the proposed the method shares the same finite element spaces with IPDG method.  For the biharmonic equation, interior penalty discontinuous Galerkin finite element methods have been studied in \cite{dong,gh,ghv,msb,ms,sm}. One obvious disadvantage of discontinuous finite element methods is their rather complicated
formulations which are often necessary to guarantee well posedness and convergence of the methods.
 For example, the symmetric IPDG method for the biharmonic equation with homogenous boundary conditions \cite{dong,gh} has the following formulation:
\begin{eqnarray}
(\Delta u_h,\Delta v)_{\T_h}&+&\sum\int_e(\{\nabla\Delta u_h\}\cdot[v]+\{\nabla\Delta v\}\cdot[u_h])ds\nonumber\\
&+&\sum\int_e(\{\Delta u_h\}\cdot[\nabla v]+\{\Delta v\}\cdot[\nabla u_h])ds\nonumber\\
&+& \sum\int_e(\sigma [u_h]\cdot [v]+\tau[\nabla u_h][\nabla v])ds=(f,v), \label{ipdg}
\end{eqnarray}
where $\sigma$ and $\tau$ are two parameters that need to be tuned.

The purpose of this work is to introduce a conforming DG finite element method for the biharmonic equation which has the following ultra simple formulation without any stabilizing/penalty terms and other mixed terms of lower dimension integrations in (\ref{ipdg}):
\begin{equation}\label{cdg}
(\Delta_w u_h,\ \Delta_w v)=(f,\;v),
\end{equation}
where $\Delta_w$ is called weak Laplacian, an approximation of $\Delta$. The formulation (\ref{cdg}) can be viewed as a counterpart of (\ref{cfe}) for discontinuous approximations.
The conforming DG method was first introduced in \cite{cdg1,cdg2} for second order elliptic equations, which, by name, means the method using the finite element spaces of DG methods and the simple formulations of conforming methods. This new finite element method shares the same finite element space with the IPDG methods but having much simpler formulation. This simple formulation can be obtained by defining weak Laplacian $\Delta_w$ appropriately.  The idea here is to raise the degree of polynomials used to compute weak Laplacian $\Delta_w$.  Using higher degree polynomials in computation of weak Laplacian will not change the size, neither the global sparsity of the stiffness matrix. Optimal order error estimates in a discrete $H^2$ for $k\ge 2$ and in $L^2$ norm for $k>2$ are
established for the corresponding  finite element
solutions. Numerical results are provided to  confirm the theories.

\section{A Conforming DG Finite Element Method}\label{Section:wg-fem}

Let ${\mathcal T}_h$ be a partition of the domain $\Omega$ consisting of
polygons in two dimension or polyhedra in three dimension satisfying a set of conditions defined in \cite{wy-mixed} and additional conditions specified in \cite{sfwg-bi}.
Denote by ${\cal E}_h$ the set of all edges or flat faces in ${\cal
T}_h$, and let ${\cal E}_h^0={\cal E}_h\backslash\partial\Omega$ be
the set of all interior edges or flat faces.

For simplicity, we adopt the following notations,
\begin{eqnarray*}
(v,w)_{\T_h} &=& \sum_{T\in\T_h}(v,w)_T=\sum_{T\in\T_h}\int_T vw d\bx,\\
 \l v,w\r_{\partial\T_h}&=&\sum_{T\in\T_h} \l v,w\r_\pT=\sum_{T\in\T_h} \int_\pT vw ds.
\end{eqnarray*}
Let $P_k(K)$ consist all the polynomials degree less or equal to $k$ defined on $K$.

We define a finite element space $V_h$ for $k\ge 2$ as follows
\begin{equation}\label{vh}
V_h=\left\{ v\in L^2(\Omega):\ v|_{T}\in P_{k}(T)\;\; T\in\T_h \right\}.
\end{equation}

Let $T_1$ and $T_2$ be two polygons/polyhedrons
sharing $e$ if $e\in\E_h^0$.  Let $v$ and $\bq$ be  scalar and vector valued functions, the jumps $[v]$  and $[\bq]$ are defined as
\begin{equation}\label{jump}
[v]=v|_{T_1}\bn_1+ v|_{T_2}\bn_2, \quad  [\bq]=\bq|_{T_1}\cdot\bn_1+ \bq|_{T_2}\cdot\bn_2,
\end{equation}
and the averages $\{v\}$  and $\{\bq\}$ are defined as
\begin{equation}\label{avg}
\{v\}=\frac12(v|_{T_1}+v|_{T_2}) \quad \{\bq\}=\frac12 (\bq|_{T_1}+ \bq|_{T_2}).
\end{equation}
If $e$ is on $\partial\Omega$, then
\begin{equation}\label{avgb}
\{v\}=0,\quad \{\bq\}=0,\quad  [v]= v\bn,\quad [\bq]=\bq\cdot\bn.
\end{equation}

The new conforming DG finite element method for the biharmonic equation (\ref{pde})-(\ref{bc-n}) is defined as follows.

\begin{algorithm}
A numerical approximation for (\ref{pde})-(\ref{bc-n}) can be
obtained by seeking $u_h\in V_h$
satisfying the following equation:
\begin{equation}\label{wg}
(\Delta_w u_h,\ \Delta_w v)_{\T_h}=(f,\;v) \quad\forall v\in V_h.
\end{equation}
\end{algorithm}

Next we will discuss how to compute the weak Laplacian $\Delta_wu_h$ and $\Delta_w v$ in (\ref{wg}).  The concept of weak derivative was first introduced in \cite{wy,wy-mixed} for weak functions in weak Galerkin methods and was modified in \cite{mwg, mwg1}. A weak Laplacian operator, denoted by $\Delta_{w}$,
is defined as the unique polynomial $\Delta_{w}v \in P_j(T)$ for $j>k$ that
satisfies the following equation
\begin{equation}\label{wl}
(\Delta_{w} v, \ \varphi)_T = ( v, \ \Delta\varphi)_T-\l \{v\},\
\nabla\varphi\cdot\bn\r_\pT +\l \{\nabla v\}\cdot\bn, \ \varphi\r_\pT,\quad
\forall \varphi\in P_j(T).
\end{equation}


\begin{lemma}
Let $\phi\in H^2(\Omega)$, then on any $T\in\T_h$,
\begin{equation}\label{key}
\Delta_{w} \phi = \bbQ_h (\Delta \phi),
\end{equation}
where $\bbQ_h$ is a locally defined $L^2$ projections onto $P_{j}(T)$ on each element $T\in\T_h$.
\end{lemma}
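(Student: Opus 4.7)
The plan is to reduce the identity to the unique variational characterization of $\bbQ_h$: since the $L^2$ projection onto $P_j(T)$ is determined by $(\bbQ_h(\Delta\phi),\varphi)_T = (\Delta\phi,\varphi)_T$ for every $\varphi\in P_j(T)$, and $\Delta_w\phi\in P_j(T)$ by construction in (\ref{wl}), it suffices to establish
\[
(\Delta_w\phi,\varphi)_T \;=\; (\Delta\phi,\varphi)_T \qquad \forall\,\varphi\in P_j(T).
\]

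The first step is to integrate by parts twice on the volume term in the defining relation (\ref{wl}). Green's identities give
\[
(\phi,\Delta\varphi)_T \;=\; (\Delta\phi,\varphi)_T \;-\; \l \nabla\phi\cdot\bn,\,\varphi\r_\pT \;+\; \l \phi,\,\nabla\varphi\cdot\bn\r_\pT.
\]
Substituting this into (\ref{wl}) and collecting the boundary contributions shows that the discrepancy between $(\Delta_w\phi,\varphi)_T$ and $(\Delta\phi,\varphi)_T$ is exactly
\[
\l \phi-\{\phi\},\,\nabla\varphi\cdot\bn\r_\pT \;+\; \l (\{\nabla\phi\}-\nabla\phi)\cdot\bn,\,\varphi\r_\pT,
\]
so the remaining task is to kill these two boundary terms.

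This is where the regularity $\phi\in H^2(\Omega)$ enters. On every interior face $e\in\E_h^0$ the traces of $\phi$ and $\nabla\phi$ are single-valued, hence $\{\phi\}=\phi$ and $\{\nabla\phi\}=\nabla\phi$, making both integrands vanish. On any face $e\subset\partial\Omega$ the convention (\ref{avgb}) sets $\{\phi\}=0$ and $\{\nabla\phi\}=0$, and the matching cancellation is produced in the setting $\phi\in H^2_0(\Omega)$ in which this identity is actually invoked in the analysis of the exact solution. Uniqueness of the $L^2$ projection then delivers $\Delta_w\phi=\bbQ_h(\Delta\phi)$. I do not expect any genuine obstacle: the argument is pure bookkeeping with the divergence theorem, and the only care needed is aligning the boundary-face convention with the boundary behavior of $\phi$.
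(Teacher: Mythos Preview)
Your argument is correct and is essentially the same as the paper's: both start from the defining relation (\ref{wl}), use Green's identities to trade $(\phi,\Delta\varphi)_T$ for $(\Delta\phi,\varphi)_T$ plus boundary terms, and then use the single-valuedness of the traces of $\phi$ and $\nabla\phi$ (coming from $\phi\in H^2(\Omega)$) to cancel the boundary contributions. The paper simply writes $\{\phi\}=\phi$ and $\{\nabla\phi\}=\nabla\phi$ in one line and then integrates by parts, whereas you integrate by parts first and then cancel; the content is identical.

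Your treatment of boundary faces is in fact more careful than the paper's. The paper's proof silently identifies $\{\phi\}$ with $\phi$ and $\{\nabla\phi\}$ with $\nabla\phi$ on all of $\partial T$, which under the convention (\ref{avgb}) is only valid on interior faces for a generic $\phi\in H^2(\Omega)$. You correctly observe that on $e\subset\partial\Omega$ the cancellation requires $\phi=0$ and $\nabla\phi\cdot\bn=0$ there, i.e.\ $\phi\in H_0^2(\Omega)$, which is precisely the setting (the exact solution $u$ and the dual solution $w$) in which (\ref{key}) is actually applied. This is a fair caveat about the lemma's hypothesis rather than a defect in your proof.
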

\begin{proof}
It is not hard to see that for any $\tau\in P_j(T)$ we
have
\begin{eqnarray*}
(\Delta_{w} \phi,\ \tau)_T &=& (\phi,\ \Delta\tau)_T + \langle
\{\nabla \phi\}\cdot\bn, \;\tau\rangle_{\pT}-\langle  \{\phi\},\ \nabla\tau\cdot\bn \rangle_{\pT}\\
&=&(\phi, \Delta\tau)_T + \langle \nabla \phi\cdot\bn,\ \tau\rangle_{\partial
T}-\langle \phi,\ \nabla\tau\cdot\bn \rangle_{\pT}\\
&=&(\Delta \phi,\ \tau)_T=(\bbQ_h\Delta \phi,\ \tau)_T,
\end{eqnarray*}
which implies
\begin{equation}\label{key-Laplacian}
\Delta_{w}  \phi = \bbQ_h (\Delta \phi).
\end{equation}
We complete the proof.
\end{proof}
\section{Well Posedness}

First we define a semi-norm $\3bar\cdot\3bar$ as
\begin{equation}\label{3barnorm}
\3bar v\3bar^2=(\Delta_wv,\Delta_wv)_{\T_h}.
\end{equation}
Then we introduce a discrete $H^2$ norm as follows:
\begin{equation}\label{norm}
\|v\|_{2,h} = \left( \sum_{T\in\T_h}\left(\|\Delta v\|_T^2+h_T^{-3} \|  [v]\|^2_\pT+h_T^{-1}\|[\nabla v]\|^2_\pT\right) \right)^{\frac12}.
\end{equation}
The following lemma indicates that the two norms $\|\cdot\|_{2,h}$ and $\3bar\cdot\3bar$ are equivalent.

First we need the following trace inequality. For any function $\varphi\in H^1(T)$, the  trace
inequality holds true (see \cite{wy-mixed} for details):
\begin{equation}\label{trace}
\|\varphi\|_{e}^2 \leq C \left( h_T^{-1} \|\varphi\|_T^2 + h_T
\|\nabla \varphi\|_{T}^2\right).
\end{equation}

\begin{lemma} There exist two positive constants $C_1$ and $C_2$ such
that for any $v\in V_h$, we have
\begin{equation}\label{happy}
C_1 \|v\|_{2,h}\le \3bar v\3bar \leq C_2 \|v\|_{2,h}.
\end{equation}
\end{lemma}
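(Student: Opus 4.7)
The plan is to establish both directions of (\ref{happy}) using the defining identity (\ref{wl}) with well-chosen test functions. Integrating $(v,\Delta\varphi)_T$ by parts twice in (\ref{wl}) produces the element-wise identity
\[
(\Delta_w v - \Delta v, \varphi)_T = \langle v-\{v\}, \nabla\varphi\cdot\bn\rangle_\pT - \langle (\nabla v-\{\nabla v\})\cdot\bn, \varphi\rangle_\pT
\]
valid for every $\varphi \in P_j(T)$, which serves as the basic tool for both bounds.

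For the upper bound, I would set $\varphi = \Delta_w v$ above. Using that $|v-\{v\}|$ and $|(\nabla v-\{\nabla v\})\cdot\bn|$ are controlled by $|[v]|$ and $|[\nabla v]|$ on each edge, together with the trace inequality (\ref{trace}) and the inverse estimates $\|\nabla(\Delta_w v)\|_\pT \le Ch_T^{-3/2}\|\Delta_w v\|_T$ and $\|\Delta_w v\|_\pT \le Ch_T^{-1/2}\|\Delta_w v\|_T$ (available since $\Delta_w v \in P_j(T)$ is polynomial), Cauchy--Schwarz gives the element bound $\|\Delta_w v\|_T \le C(\|\Delta v\|_T + h_T^{-3/2}\|[v]\|_\pT + h_T^{-1/2}\|[\nabla v]\|_\pT)$, and summing over $T$ yields $\3bar v\3bar \le C_2\|v\|_{2,h}$.

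For the lower bound, the plan is to control each of the three terms of $\|v\|_{2,h}^2$ separately. Taking $\varphi = \Delta v \in P_{k-2}(T) \subset P_j(T)$ in the identity above and applying Young's inequality together with the inverse bounds on $\Delta v$ produces $\|\Delta v\|_T^2 \le C\bigl(\|\Delta_w v\|_T^2 + h_T^{-3}\|[v]\|_\pT^2 + h_T^{-1}\|[\nabla v]\|_\pT^2\bigr)$. For the jump contributions, the critical step is to construct, for each edge $e$ of $T$, an edge-bubble $\phi_e \in P_j(T)$ that is $L^2(T)$-orthogonal to $P_{k-2}(T)$ and whose boundary data selectively matches $(v-\{v\})|_e$ (respectively $((\nabla v-\{\nabla v\})\cdot\bn)|_e$) on $e$, vanishing on the remaining edges; the extra degrees of freedom furnished by $j>k$ together with the polytopal mesh hypotheses of \cite{wy-mixed,sfwg-bi} make this simultaneous specification possible. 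Because $\phi_e \perp P_{k-2}$ and $\Delta v \in P_{k-2}$, substituting $\phi_e$ into the identity collapses the $(\Delta v,\phi_e)_T$ term and leaves an identity of the form $\|v-\{v\}\|_e^2 = (\Delta_w v,\phi_e)_T$ (and its analogue for the gradient jump). Reference-element scalings $\|\phi_e\|_T \le Ch_T^{3/2}\|v-\{v\}\|_e$ and $\|\phi_e\|_T \le Ch_T^{1/2}\|(\nabla v-\{\nabla v\})\cdot\bn\|_e$ then produce $h_T^{-3}\|[v]\|_\pT^2 + h_T^{-1}\|[\nabla v]\|_\pT^2 \le C\|\Delta_w v\|_T^2$ \emph{with no} $\|\Delta v\|_T$ term on the right. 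Combining this with the preceding $\|\Delta v\|_T$ bound closes the lower estimate.

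The main obstacle is the bubble construction itself: one must exhibit polynomials in $P_j(T)$ that simultaneously satisfy prescribed single-edge trace conditions, vanish on the remaining edges, and are $L^2(T)$-orthogonal to $P_{k-2}(T)$, with scaling controlled by the single edge trace in question. The orthogonality is exactly what prevents a circular dependence between $\|\Delta v\|_T$ and the jump bounds; if one merely uses an arbitrary edge bubble, the resulting jump estimate carries a $\|\Delta v\|_T$ term that can be absorbed only under favorable constants. It is precisely the extra room provided by $j>k$ and the polytopal mesh regularity of \cite{wy-mixed,sfwg-bi} that underwrites both the existence and the correct scaling of $\phi_e$ on a reference cell.
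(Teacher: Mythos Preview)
Your proposal is correct and follows essentially the same route as the paper: the upper bound via $\varphi=\Delta_w v$ is identical, and for the lower bound both you and the paper invoke special test polynomials (the paper cites Lemmas~3.1--3.2 of \cite{sfwg-bi} for exactly the edge-bubble construction you sketch, with the orthogonality and scaling properties you list) to isolate the jump terms, then take $\varphi=\Delta v$ to control $\|\Delta v\|_T$. The only cosmetic difference is ordering: the paper bounds the jumps first and uses them to finish the $\|\Delta v\|_T$ estimate cleanly, whereas you bound $\|\Delta v\|_T$ first with jump residuals and then close with the jump estimates; both orderings work.
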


\begin{proof}
For any $v\in V_h$, it follows from the definition of
weak Laplacian (\ref{wl}) and integration by parts that
\begin{eqnarray}
(\Delta_{w} v, \ \varphi)_T &=& ( v, \ \Delta\varphi)_T-\l \{v\},\
\nabla\varphi\cdot\bn\r_\pT +\l \{\nabla v\}\cdot\bn, \ \varphi\r_\pT\nonumber\\
&=&-(\nabla v, \ \nabla\varphi)_T+\l v-\{v\},\
\nabla\varphi\cdot\bn\r_\pT +\l \{\nabla v\}\cdot\bn, \ \varphi\r_\pT\nonumber\\
&=&(\Delta v, \ \varphi)_T+\l v-\{v\},\
\nabla\varphi\cdot\bn\r_\pT +\l (\{\nabla v\}-\nabla v)\cdot\bn, \ \varphi\r_\pT.\label{n-1}
\end{eqnarray}
By letting $\varphi=\Delta_w v$ in (\ref{n-1}) we arrive at
\begin{eqnarray*}
\|\Delta_{w} v\|^2_T &=&(\Delta v, \ \Delta_w v)_T+\l v-\{v\},\
\nabla(\Delta_w v)\cdot\bn\r_\pT +\l (\{\nabla v\}-\nabla v)\cdot\bn, \ \Delta_w v\r_\pT
\end{eqnarray*}
It is easy to see that the following equations hold true for $v$  on $T$ with $e\subset\pT$,
\begin{equation}\label{jp}
\|v-\{v\}\|_e=\|[v]\|_e\quad {\rm if} \;e\subset \partial\Omega,\quad\|v-\{v\}\|_e=\frac12\|[v]\|_e\;\; {\rm if} \;e\in\E_h^0.
\end{equation}
and
\begin{equation}\label{jp1}
\begin{split}
&\|(\nabla v-\{\nabla v\})\cdot\bn\|_e=\|[\nabla v]\|_e\quad {\rm if} \;e\subset \partial\Omega,\\
&|(\nabla v-\{\nabla v\})\cdot\bn\|_e=\frac12\|[\nabla v]\|_e\;\; {\rm if} \;e\in\E_h^0.
\end{split}
\end{equation}
From the trace inequality (\ref{trace}), (\ref{jp})-(\ref{jp1}) and the inverse inequality
we have
\begin{eqnarray*}
\|\Delta_wv\|^2_T &\le& \|\Delta v\|_T \|\Delta_w v\|_T+ \|v-\{v\}\|_\pT \|\nabla(\Delta_w v)\|_\pT+\|(\{\nabla v\}-\nabla v)\cdot\bn\|_\pT \|\Delta_w v\|_\pT\\
&\le& C(\|\Delta v\|_T + h_T^{-3/2}\|[v]\|_\pT+h_T^{-1/2}\|[\nabla v]\|_\pT) \|\Delta_w v\|_T,
\end{eqnarray*}
which implies
$$
\|\Delta_w v\|_T \le C \left(\|\Delta v\|_T +h_T^{-3/2}\|[v]\|_\pT+h_T^{-1/2}\|[ \nabla v]\|_\pT\right),
$$
and consequently
$$\3bar v\3bar \leq C_2 \|v\|_{2,h}.$$
Next we will prove
$$
\sum h_T^{-3}\|[v]\|^2_\pT\le C\3bar v\3bar.
$$
It follows from (\ref{n-1}) that for any $\varphi\in P_j(T)$,
\begin{eqnarray}
(\Delta_{w} v, \ \varphi)_T
&=&(\Delta v, \ \varphi)_T+\l v-\{v\},\
\nabla\varphi\cdot\bn\r_\pT\nonumber\\
& +&\l (\{\nabla v\}-\nabla v)\cdot\bn, \ \varphi\r_\pT.\label{n-11}
\end{eqnarray}
By Lemma 3.1 in \cite{sfwg-bi}, there exist a $\varphi_0$ such that for $e\subset\pT$,
\begin{eqnarray*}
&&(\Delta v,\varphi_0)_T=0,\;\;\l (\{\nabla v\}-\nabla v)\cdot\bn, \ \varphi_0\r_\pT=0,\nonumber\\
&& \l v-\{v\},\nabla\varphi_0\cdot\bn\r_{\pT\setminus e}=0,\;\; \l v-\{v\},\nabla\varphi_0\cdot\bn\r_\pT=\|v-\{v\}\|_e^2.
\end{eqnarray*}
and
\begin{eqnarray*}
\|\varphi_0\|_T\le C h_T^{3/2}\|v-\{v\}\|_e.
\end{eqnarray*}
Letting $\varphi=\varphi_0$ in (\ref{n-11}) yields
\begin{eqnarray*}
\|v-\{v\}\|_e^2&=&(\Delta_{w} v, \ \varphi_0)_T\le \|\Delta_{w} v\|_T\|\varphi_0\|_T\le Ch_T^{3/2} \|\Delta_{w} v\|_T\|v-\{v\}\|_e.
\end{eqnarray*}
which implies
\begin{eqnarray*}
\|v-\{v\}\|_e\le C h_T^{3/2} \|\Delta_{w} v\|_T.
\end{eqnarray*}
Taking the summation of the above equation over $T\in\T_h$ and using (\ref{jp}), one has
\begin{equation}\label{n-22}
\sum_{T\in\T_h} h_T^{-3}\|[v]\|^2_\pT\le C\3bar v\3bar^2.
\end{equation}
Similarly, by Lemma 3.2 in \cite{sfwg-bi}, we can have
\begin{equation}\label{n-33}
\sum_{T\in\T_h} h_T^{-1}\|[\nabla v]\|^2_\pT \le C\3bar v\3bar^2.
\end{equation}
Finally, by letting $\varphi=\Delta v$ in (\ref{n-11}) we arrive at
\begin{eqnarray*}
\|\Delta v\|^2_T &=&(\Delta v, \ \Delta_w v)_T-\l v-\{v\},\
\nabla(\Delta_w v)\cdot\bn\r_\pT \\
   &&\quad \ -  \l (\{\nabla v\}-\nabla v)\cdot\bn, \ \Delta_w v\r_\pT.
\end{eqnarray*}
Using the trace inequality (\ref{trace}), the inverse inequality and (\ref{n-22})-(\ref{n-33}), one has
\begin{eqnarray*}
\|\Delta v\|^2_T &\le& C \|\Delta_w v\|_T\|\Delta v\|_T,
\end{eqnarray*}
which gives
\begin{eqnarray*}
\sum_{T\in\T_h}\|\Delta v\|^2_T &\le& C \3bar v\3bar^2.
\end{eqnarray*}
We complete the proof.

\end{proof}

\smallskip

\begin{lemma}
The finite element scheme (\ref{wg}) has a unique
solution.
\end{lemma}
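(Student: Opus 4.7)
The plan is to reduce to a standard dimension-count argument: since (\ref{wg}) is a square linear system on the finite-dimensional space $V_h$, existence of a solution for every $f$ is equivalent to uniqueness, so it suffices to show that the homogeneous problem (take $f\equiv 0$) forces $u_h=0$.

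Assuming $f=0$, I would test (\ref{wg}) against $v=u_h$ to obtain $(\Delta_w u_h,\Delta_w u_h)_{\T_h}=0$, i.e.\ $\3bar u_h\3bar=0$. Invoking the norm equivalence (\ref{happy}) already established in the previous lemma, I conclude that $\|u_h\|_{2,h}=0$. From the definition (\ref{norm}) of $\|\cdot\|_{2,h}$, this in turn yields three pieces of information simultaneously on each $T\in\T_h$: $\Delta u_h=0$ on $T$, $[u_h]=0$ on every edge/face $e\subset\pT$, and $[\nabla u_h]=0$ on every edge/face $e\subset\pT$.

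The jump conditions $[u_h]=0$ and $[\nabla u_h]=0$ across interior faces promote the piecewise polynomial $u_h$ to an $H^2(\Omega)$ function; the same conditions applied on boundary faces (using the conventions in (\ref{avgb})) give $u_h=0$ and $\nabla u_h\cdot\bn=0$ on $\partial\Omega$, so in fact $u_h\in H_0^2(\Omega)$. Since $u_h\in H^2(\Omega)$, its distributional Laplacian coincides with its piecewise Laplacian, which vanishes; hence $\Delta u_h=0$ in $\Omega$.

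The last step is to deduce $u_h=0$ from $u_h\in H_0^2(\Omega)$ and $\Delta u_h=0$. I would integrate by parts:
\begin{equation*}
0=(\Delta u_h,u_h)_{\Omega}=-(\nabla u_h,\nabla u_h)_{\Omega}+\l \nabla u_h\cdot\bn,u_h\r_{\partial\Omega}=-\|\nabla u_h\|_{\Omega}^2,
\end{equation*}
where the boundary term drops because $u_h=0$ on $\partial\Omega$. Thus $\nabla u_h\equiv 0$, so $u_h$ is constant, and the homogeneous Dirichlet condition forces that constant to be $0$. This completes the reduction and proves well-posedness. There is no genuine obstacle here, since all the heavy lifting has been absorbed into the norm equivalence (\ref{happy}); the only thing to be slightly careful about is making sure the jump-based bounds are read correctly at boundary faces when promoting $u_h$ into $H_0^2(\Omega)$.
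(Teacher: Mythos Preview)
Your proof is correct and follows essentially the same route as the paper's: both set $f=0$, test with $v=u_h$ to get $\3bar u_h\3bar=0$, invoke the norm equivalence (\ref{happy}) to force $\|u_h\|_{2,h}=0$, and then read off that $u_h$ is a harmonic function in $H_0^2(\Omega)$, hence zero. The only difference is cosmetic: the paper simply asserts that a harmonic function vanishing on $\partial\Omega$ must be zero, while you spell this out via the identity $0=(\Delta u_h,u_h)_\Omega=-\|\nabla u_h\|_\Omega^2$.
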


\begin{proof}
It suffices to show that the solution of (\ref{wg}) is trivial if
$f=g=\phi=0$. It follows that
\[
(\Delta_w u_h,\Delta_w u_h)_{\T_h}=0.
\]
Then the norm equivalence (\ref{happy}) implies $\|u_h\|_{2,h}=0$, i.e.
$$\sum_{T\in\T_h} \|\Delta u_h\|_T^2=0,\;\; \sum_{T\in\T_h} h_T^{-3}\|[u_h]\|=0,\;\ \sum_{T\in\T_h} h_T^{-1}\|[\nabla u_h]\|=0.$$
Therefore,  $u_h$ is a smooth harmonic function  on $\Omega$ and $u_h=0$ on $\partial\Omega$. Thus  we have $u_h=0$, which completes the proof.
\end{proof}

\section{An Error Equation}

Let $e_h=u-u_h$. Next we derive an error equation that $e_h$ satisfies.

\begin{lemma}
For any $v\in V_h$, we have
\begin{eqnarray}
(\Delta_we_h,\Delta_wv)_{\T_h}=\ell_1(u,v)+\ell_2(u,v),\label{ee}
\end{eqnarray}
where
\begin{eqnarray*}
\ell_1(u,v)&=& \langle \nabla(\bbQ_h\Delta u-\Delta u)\cdot\bn, v-\{v\}\rangle_{\pT_h},\\
\ell_2(u,v)&=& \langle \Delta u-\bbQ_h\Delta  u, (\nabla v-\{\nabla v\})\cdot\bn\rangle_{\pT}.
\end{eqnarray*}
\end{lemma}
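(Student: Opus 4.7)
The plan is to start from the scheme, apply the identity $\Delta_w u = \bbQ_h \Delta u$ from the previous lemma, invoke the commutation formula (\ref{n-1}) already derived in the proof of norm equivalence, and finally reorganize all boundary contributions into $\ell_1$ and $\ell_2$ by exploiting the fact that $\{v\}$ and $\{\nabla v\}$ are single-valued across every interior face, together with the fact that the exact solution $u$ is regular enough for $\Delta u$ and $\nabla \Delta u$ to be single-valued across such faces as well.

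Concretely, I would first use (\ref{wg}) and (\ref{key}) to write $(\Delta_w e_h, \Delta_w v)_{\T_h} = (\bbQ_h \Delta u, \Delta_w v)_{\T_h} - (f,v)$. On each element, formula (\ref{n-1}) with the choice $\varphi = \bbQ_h \Delta u \in P_j(T)$ gives $(\Delta_w v, \bbQ_h \Delta u)_T = (\Delta v, \bbQ_h \Delta u)_T + \langle v - \{v\}, \nabla(\bbQ_h \Delta u)\cdot \bn \rangle_\pT + \langle (\{\nabla v\} - \nabla v)\cdot \bn, \bbQ_h \Delta u \rangle_\pT$. Since $\Delta v \in P_{k-2}(T) \subset P_j(T)$, the volume term equals $(\Delta v, \Delta u)_T$ by definition of the $L^2$ projection $\bbQ_h$. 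Integrating this by parts twice on each $T$ and summing produces $(v,f) + \sum_T \langle \Delta u, \nabla v \cdot \bn \rangle_\pT - \sum_T \langle v, \nabla \Delta u \cdot \bn \rangle_\pT$, and the $(v,f)$ cancels the corresponding term already present.

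The final reorganization of the four remaining edge integrals into $\ell_1 + \ell_2$ is the heart of the argument. The key observation is that $\sum_T \langle \{v\}, \nabla \Delta u \cdot \bn \rangle_\pT = 0$ and $\sum_T \langle \Delta u, \{\nabla v\}\cdot \bn \rangle_\pT = 0$: on interior edges the two neighboring contributions cancel because $\bn_1 + \bn_2 = 0$ and $\{v\}$, $\{\nabla v\}$, $\Delta u$, $\nabla \Delta u$ are all single-valued, while on boundary edges the averages vanish by (\ref{avgb}). Subtracting these zeros lets me replace $v$ by $v - \{v\}$ in front of $\nabla \Delta u \cdot \bn$ and $\nabla v$ by $\nabla v - \{\nabla v\}$ in front of $\Delta u$; the two $v - \{v\}$ pieces then assemble into $\ell_1(u,v)$, and the two $(\nabla v - \{\nabla v\})\cdot \bn$ pieces combine with the $\bbQ_h \Delta u$ edge term to produce $\ell_2(u,v)$. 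The main obstacle is purely the bookkeeping of signs and of $\bn_1 = -\bn_2$ cancellations on interior faces; once these averaging identities are in hand, the assembly is mechanical.
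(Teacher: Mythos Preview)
Your proposal is correct and follows essentially the same route as the paper: both arguments hinge on the identity $\Delta_w u=\bbQ_h\Delta u$ from (\ref{key}), the integration-by-parts formula (\ref{n-1}) relating $(\Delta_w v,\varphi)_T$ to $(\Delta v,\varphi)_T$ plus edge terms, and the vanishing of $\sum_T\langle\{v\},\nabla\Delta u\cdot\bn\rangle_\pT$ and $\sum_T\langle\Delta u,\{\nabla v\}\cdot\bn\rangle_\pT$. The only cosmetic difference is that the paper starts from the PDE side (testing $\Delta^2 u=f$ against $v$ and then rewriting $(\Delta u,\Delta v)_{\T_h}$ in terms of $(\Delta_w u,\Delta_w v)_{\T_h}$), whereas you start from the discrete side $(\Delta_w e_h,\Delta_w v)_{\T_h}=(\bbQ_h\Delta u,\Delta_w v)_{\T_h}-(f,v)$ and integrate back to recover the $(v,f)$ cancellation; the algebra is identical.
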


\begin{proof}
Testing (\ref{pde}) by  $v\in V_h$  and using the fact that
$\sum_{T\in\T_h}\langle \nabla (\Delta u)\cdot\bn, \{v\}\rangle_\pT=0$ and $\sum_{T\in\T_h}\langle \Delta u, \{\nabla v\}\cdot\bn\rangle_\pT=0$ and integration by parts,  we arrive at
\begin{eqnarray}
(f,v)&=&(\Delta^2u, v)_{\T_h}\nonumber\\
&=&(\Delta u,\Delta v)_{\T_h} -\langle \Delta u,
\nabla v\cdot\bn\rangle_{\pT_h} + \langle\nabla(\Delta u)\cdot\bn,
v\rangle_{\pT_h}\label{m1}\\
&=&(\Delta u,\Delta v)_{\T_h} -\langle \Delta u,
(\nabla v-\{\nabla v\})\cdot\bn\rangle_{\pT_h}\nonumber\\
& +& \langle\nabla(\Delta u)\cdot\bn,
v-\{v\}\rangle_{\pT_h}.\nonumber
\end{eqnarray}
Next we investigate the term  $(\Delta u,\Delta v)_{\T_h}$ in the above equation. Using (\ref{key}), integration by parts and the definition of weak Laplacian (\ref{wl}), we have
\begin{eqnarray*}
& & (\Delta u, \Delta v)_{\T_h}=(\bbQ_h\Delta u, \Delta v)_{\T_h} \\
&=& (v, \Delta(\bbQ_h\Delta u))_T + \langle \nabla v\cdot\bn,\ \bbQ_h\Delta u\rangle_{\pT}-\langle v, \nabla(\bbQ_h\Delta u)\cdot\bn \rangle_{\pT}\nonumber\\
&=&(\Delta_w v,\ \bbQ_h\Delta u)_T-\langle v-\{v\}, \nabla(\bbQ_h\Delta u)\cdot\bn\rangle_{\pT}+\langle (\nabla v-\{\nabla v\})\cdot\bn, \bbQ_h\Delta
u\rangle_{\pT}\nonumber\\
&=&(\Delta_w u,\ \Delta_w v)_T- \langle v-\{v\}, \nabla(\bbQ_h\Delta u)\cdot\bn\rangle_{\pT}+\langle (\nabla v-\{\nabla v\})\cdot\bn, \bbQ_h\Delta
u\rangle_{\pT}.
\end{eqnarray*}
Combining the above two equations  gives
\begin{eqnarray}
(f,v)&=&(\Delta^2u, v)_{\T_h}\nonumber\\
&=&(\Delta_w u,\ \Delta_w v)_{\T_h}-\langle v-\{v\}, \nabla(\bbQ_h\Delta u-\Delta u)\cdot\bn\rangle_{\pT_h}\nonumber\\
&-&\langle (\nabla v-\{\nabla v\})\cdot\bn, \Delta u-\bbQ_h\Delta u \rangle_{\pT},\label{mmmm}
\end{eqnarray}
which implies that
\begin{eqnarray*}
(\Delta_w u,\ \Delta_w v)_{\T_h}=(f,v)+\ell_1(u,v)+\ell_2(u,v).
\end{eqnarray*}
The error equation follows from subtracting (\ref{wg}) from the above equation,
\begin{eqnarray*}
(\Delta_w e_h,\ \Delta_w v)_{\T_h}=\ell_1(u,v)+\ell_2(u,v).
\end{eqnarray*}
We have proved the lemma.
\end{proof}

\section{An Error Estimate in $H^2$}
We start this section by defining some approximation operator. Let $Q _h$ be the element-wise defined $L^2$ projection onto $P_{k}(T)$ on each element $T$.

\begin{lemma}\label{l2}
Let $k\ge 2$ and $w\in H^{\max\{k+1,4\}}(\Omega)$. There exists a constant $C$ such that the following estimates hold true:
\begin{eqnarray}
&  \left(\sum_{T\in\T_h} h_T\|\Delta w-\bbQ_h\Delta w\|_{\partial
T}^2\right)^{\frac12}
\leq C h^{k-1}\|w\|_{k+1},\label{mmm1}\\
&  \left(\sum_{T\in\T_h} h_T^3\|\nabla(\Delta w-\bbQ_h\Delta
w)\|_\pT^2\right)^{\frac12} \leq Ch^{k-1}(\|w\|_{k+1}
+h\delta_{k,2}\|w\|_4).
\label{mmm2}
\end{eqnarray}
Here $\delta_{i,j}$ is the usual Kronecker's delta with value $1$
when $i=j$ and value $0$ otherwise.
\end{lemma}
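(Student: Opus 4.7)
The plan is to reduce both estimates to the trace inequality (\ref{trace}) applied with $\varphi = \Delta w - \bbQ_h \Delta w$ (or its gradient), so that boundary norms are controlled by interior norms of the projection error, and then to invoke the standard approximation property of the $L^2$ projection onto $P_j(T)$ with $j>k$. Concretely, since $w\in H^{k+1}(\Omega)$ gives $\Delta w\in H^{k-1}(T)$, and $\bbQ_h$ reproduces $P_{k-1}(T)\subset P_j(T)$, the Bramble–Hilbert-type estimate
\begin{equation*}
\|\Delta w-\bbQ_h\Delta w\|_{m,T}\le C\,h_T^{k-1-m}\|w\|_{k+1,T},\qquad 0\le m\le k-1,
\end{equation*}
is at my disposal.

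For (\ref{mmm1}), I would apply (\ref{trace}) to $\varphi=\Delta w-\bbQ_h\Delta w$ on each $T$, obtaining
\begin{equation*}
h_T\|\Delta w-\bbQ_h\Delta w\|_{\pT}^2\le C\bigl(\|\Delta w-\bbQ_h\Delta w\|_T^2+h_T^2\|\nabla(\Delta w-\bbQ_h\Delta w)\|_T^2\bigr),
\end{equation*}
then insert the $m=0$ and $m=1$ approximation bounds (both valid for $k\ge 2$) to conclude both terms are $\le C\,h_T^{2(k-1)}\|w\|_{k+1,T}^2$. Summing over $T\in\T_h$ and taking the square root gives the desired $Ch^{k-1}\|w\|_{k+1}$.

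For (\ref{mmm2}), the same strategy with $\varphi=\nabla(\Delta w-\bbQ_h\Delta w)$ yields
\begin{equation*}
h_T^3\|\nabla(\Delta w-\bbQ_h\Delta w)\|_{\pT}^2\le C\bigl(h_T^2\|\nabla(\Delta w-\bbQ_h\Delta w)\|_T^2+h_T^4\|\nabla^2(\Delta w-\bbQ_h\Delta w)\|_T^2\bigr).
\end{equation*}
The first term is handled for every $k\ge 2$ by the $m=1$ approximation bound, contributing $C\,h_T^{2(k-1)}\|w\|_{k+1,T}^2$. The main obstacle is the second term, because the $m=2$ bound $\|\Delta w-\bbQ_h\Delta w\|_{2,T}\le C\,h_T^{k-3}\|w\|_{k+1,T}$ is only admissible when $k\ge 3$, since $\Delta w\in H^{k-1}$ requires $k-1\ge 2$ to control the second derivative of the projection error. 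For $k\ge 3$ this contributes $C\,h_T^{2(k-1)}\|w\|_{k+1,T}^2$ as required, giving the stated bound without the Kronecker term.

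For the excluded case $k=2$, I would use the stronger hypothesis $w\in H^4(\Omega)$, so that $\Delta w\in H^2(T)$ and the approximation estimate at level $m=2$ (with $s=2$) becomes $\|\Delta w-\bbQ_h\Delta w\|_{2,T}\le C\|w\|_{4,T}$. Then the second term contributes $h_T^4\|w\|_{4,T}^2$, whose square root yields the correction $h^2\|w\|_4$, i.e.\ exactly the $h\delta_{k,2}\|w\|_4$ factor when combined with the overall $h^{k-1}=h$. Assembling both contributions and summing over $T$ completes the estimate. Throughout, I would rely solely on the trace inequality (\ref{trace}), the standard $L^2$-projection approximation estimates, and shape regularity of the partition $\T_h$; no inverse inequalities or scaling arguments beyond these are needed.
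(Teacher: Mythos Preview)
Your proposal is correct and follows exactly the approach the paper indicates: the paper does not give a detailed proof but simply states that the lemma ``can be proved by using the trace inequality (\ref{trace}) and the definition of $\bbQ_h$,'' referring to \cite{mwy-bi} for details. Your argument---trace inequality followed by the standard approximation properties of the $L^2$ projection, with the $k=2$ case handled separately via the $H^4$ regularity---is precisely this. One very minor quibble: the $H^m$ ($m\ge 1$) approximation bounds for the $L^2$ projection that you invoke do implicitly rely on an inverse inequality in their proof, so your closing remark that ``no inverse inequalities\ldots are needed'' is slightly overstated, but this does not affect the validity of the argument.
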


The above lemma can be proved by using the trace inequality (\ref{trace}) and the definition of $\bbQ_h$. The proof can also be found in \cite{mwy-bi}.

\begin{lemma} Let  $w\in H^{\max\{k+1,4\}}(\Omega)$, and $v\in V_h$.
There exists a constant $C$ such that
\begin{eqnarray}
|\ell_1(w, v)|&\le&  C h^{k-1}(\|w\|_{k+1} + h\delta_{k,2} \|w\|_4)\3bar v\3bar.\label{mm1}\\
|\ell_2(w, v)|&\le& Ch^{k-1}|w|_{k+1}\3bar v\3bar.\label{mm2}
\end{eqnarray}
\end{lemma}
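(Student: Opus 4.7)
The strategy is the standard one for DG-type error estimates: apply Cauchy--Schwarz on each boundary-integral functional, matching weights so that one factor is controlled by the approximation estimates of Lemma \ref{l2} and the other by the norm equivalence (\ref{happy}).

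First I would bound $\ell_1(w,v)$. I would split the edge integrals and insert compensating factors $h_T^{3/2}$ and $h_T^{-3/2}$ so that Cauchy--Schwarz yields
\begin{equation*}
|\ell_1(w,v)| \le \Bigl(\sum_{T\in\T_h} h_T^{3}\|\nabla(\bbQ_h\Delta w - \Delta w)\|_{\pT}^{2}\Bigr)^{1/2} \Bigl(\sum_{T\in\T_h} h_T^{-3}\|v-\{v\}\|_{\pT}^{2}\Bigr)^{1/2}.
\end{equation*}
The first factor is exactly (\ref{mmm2}), contributing $h^{k-1}(\|w\|_{k+1}+h\delta_{k,2}\|w\|_4)$. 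For the second factor I would invoke (\ref{jp}) to replace $\|v-\{v\}\|_e$ by (a constant times) $\|[v]\|_e$, so the sum is bounded by the $\|v\|_{2,h}$-norm and hence, by the lower inequality in (\ref{happy}), by $C\3bar v\3bar$. Combining gives (\ref{mm1}).

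The bound on $\ell_2(w,v)$ follows in the same way, only the weights shift. Using Cauchy--Schwarz with $h_T^{1/2}$ and $h_T^{-1/2}$,
\begin{equation*}
|\ell_2(w,v)| \le \Bigl(\sum_{T\in\T_h} h_T\|\Delta w - \bbQ_h\Delta w\|_{\pT}^{2}\Bigr)^{1/2}\Bigl(\sum_{T\in\T_h} h_T^{-1}\|(\nabla v-\{\nabla v\})\cdot\bn\|_{\pT}^{2}\Bigr)^{1/2}.
\end{equation*}
Now (\ref{mmm1}) handles the first factor with bound $Ch^{k-1}\|w\|_{k+1}$ (note there is no $\delta_{k,2}$ correction here, which matches the sharper form claimed in (\ref{mm2})), and (\ref{jp1}) together with (\ref{happy}) bounds the second factor by $C\3bar v\3bar$.

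I do not expect any real obstacle: both estimates reduce mechanically to Cauchy--Schwarz plus the already-stated ingredients. The only point requiring mild care is bookkeeping on the edges of $\pT_h$: each interior edge is visited twice with opposite normals, so the jump/average identities (\ref{jp})--(\ref{jp1}) must be applied consistently when passing between $\|v-\{v\}\|_\pT$ and $\|[v]\|_\pT$; this only affects universal constants. The slight asymmetry between (\ref{mm1}) and (\ref{mm2}) (the $h\delta_{k,2}\|w\|_4$ term appearing only in $\ell_1$) is a direct consequence of the corresponding asymmetry between (\ref{mmm2}) and (\ref{mmm1}) in Lemma \ref{l2}.
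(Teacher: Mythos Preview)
Your proposal is correct and follows essentially the same approach as the paper: Cauchy--Schwarz with the weights $h_T^{\pm 3/2}$ (resp.\ $h_T^{\pm 1/2}$), then (\ref{mmm2}) (resp.\ (\ref{mmm1})) for the approximation factor and (\ref{jp})--(\ref{jp1}) together with (\ref{happy}) for the jump factor. The paper's proof is line-for-line what you describe, including the observation that the $\delta_{k,2}$ term arises only in $\ell_1$ via (\ref{mmm2}).
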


\begin{proof}
Using the Cauchy-Schwartz inequality, (\ref{mmm1}), (\ref{mmm2}), (\ref{jp}), (\ref{jp1}) and (\ref{happy}), we have
\begin{eqnarray}
\ell_1(w,v)&=&\left|\sum_{T\in\T_h}\langle \nabla(\Delta w-\bbQ_h\Delta
w)\cdot\bn, v-\{v\}\rangle_\pT\right| \nonumber\\
&\le&  \left(\sum_{T\in\T_h}h_T^3\|\nabla(\Delta w-\bbQ_h\Delta
w)\|_\pT^2\right)^{\frac12}
\left(\sum_{T\in\T_h}h_T^{-3}\|v-\{v\}\|^2_{\pT}\right)^{\frac12}\nonumber\\
&\le&  \left(\sum_{T\in\T_h}h_T^3\|\nabla(\Delta w-\bbQ_h\Delta
w)\|_\pT^2\right)^{\frac12}
\left(\sum_{T\in\T_h}h_T^{-3}\|[v]\|^2_{\pT}\right)^{\frac12}\nonumber\\
&\le& C h^{k-1}(\|w\|_{k+1} + h\delta_{k,2} \|w\|_4) \3barv\3bar,\label{ell-1}
\end{eqnarray}
and
\begin{eqnarray}
\ell_2(w,v)&=&\left|\sum_{T\in\T_h} \langle \Delta w-\bbQ_h\Delta w, (\nabla v-\{\nabla v\})\cdot\bn\rangle_\pT\right|\nonumber\\
\le &&\left(\sum_{T\in\T_h} h_T\|\Delta w-\bbQ_h\Delta
w\|_\pT^2\right)^{\frac12} \left(\sum_{T\in\T_h} h_T^{-1}
\|[\nabla v]\|_\pT^2\right)^{\frac12}\nonumber\\
\le && C h^{k-1}\|w\|_{k+1} \3bar v\3bar.\label{ell-2}
\end{eqnarray}
We complete the proof.
\end{proof}

\begin{lemma}
Let  $w\in H^{\max\{k+1,4\}}(\Omega)$,  then
\begin{equation}\label{eee2}
\3bar w-Q_hw\3bar\le Ch^{k-1}|w|_{k+1}.
\end{equation}
\end{lemma}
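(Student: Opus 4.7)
The plan is to bound $\3bar w - Q_h w\3bar$ by the triple-norm/discrete-$H^2$-norm equivalence (\ref{happy}), and then estimate each of the three terms in $\|w - Q_h w\|_{2,h}$ by standard $L^2$-projection approximation theory combined with the trace inequality (\ref{trace}).

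First, I would invoke the upper bound in (\ref{happy}) to obtain $\3bar w - Q_h w\3bar \le C_2 \|w - Q_h w\|_{2,h}$. This reduces the problem to bounding the three pieces
\[
\sum_{T\in\T_h}\|\Delta(w-Q_hw)\|_T^2,\qquad \sum_{T\in\T_h} h_T^{-3}\|[w-Q_hw]\|_\pT^2,\qquad \sum_{T\in\T_h} h_T^{-1}\|[\nabla(w-Q_hw)]\|_\pT^2.
\]
The first is immediate from the standard estimate $\|w-Q_hw\|_{m,T}\le Ch_T^{k+1-m}|w|_{k+1,T}$ with $m=2$, giving $h_T^{k-1}|w|_{k+1,T}$ on each element.

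For the two boundary terms, the key observation is that because $w\in H^2(\Omega)$, both $[w]$ and $[\nabla w]$ vanish on every interior edge/face, so on $e\in\E_h^0$ the jumps reduce to $[w-Q_hw] = -[Q_hw] = -[Q_hw - w]$ and similarly for the gradient; on boundary edges the jump is simply the one-sided trace of $w-Q_hw$. In either case, the quantity on each $e\subset\pT$ is controlled by $\|w-Q_hw\|_e$ (resp.\ $\|\nabla(w-Q_hw)\|_e$) from the two adjacent elements. Applying the trace inequality (\ref{trace}) to $\varphi = w-Q_hw$ gives $\|w-Q_hw\|_e^2 \le C(h_T^{-1}\|w-Q_hw\|_T^2 + h_T\|\nabla(w-Q_hw)\|_T^2) \le C h_T^{2k+1}|w|_{k+1,T}^2$, and similarly $\|\nabla(w-Q_hw)\|_e^2 \le C h_T^{2k-1}|w|_{k+1,T}^2$. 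Multiplying by $h_T^{-3}$ and $h_T^{-1}$ respectively and summing over $T\in\T_h$ produces $Ch^{2(k-1)}|w|_{k+1}^2$ in both cases.

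Combining the three estimates yields $\|w-Q_hw\|_{2,h} \le Ch^{k-1}|w|_{k+1}$, and then the norm equivalence delivers the claimed bound. The only mildly delicate point is the bookkeeping for the jump terms, where one must use the $H^2$-regularity of $w$ to replace a jump of a non-conforming quantity by a jump of the projection error, so that the trace inequality can be applied face-by-face; everything else is a direct application of polynomial approximation.
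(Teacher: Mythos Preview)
Your argument is sound in substance but takes a different path from the paper, and there is one point you must be more careful about. The norm equivalence (\ref{happy}) is stated and proved only for $v\in V_h$, whereas $w-Q_hw\notin V_h$ since $w$ is merely in $H^{\max\{k+1,4\}}(\Omega)$. The upper bound $\3bar v\3bar\le C_2\|v\|_{2,h}$ does in fact extend to any $v$ with $\Delta v|_T\in L^2(T)$ and well-defined edge traces, because the proof of that direction applies the trace and inverse inequalities only to the polynomial $\Delta_w v$, never to $v$ itself; but you should say this explicitly rather than invoke (\ref{happy}) as a black box.

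The paper's proof sidesteps this issue entirely: it does not go through $\|\cdot\|_{2,h}$ at all, but inserts $\varphi=\Delta_w(w-Q_hw)$ directly into the definition (\ref{wl}) to get
\[
\|\Delta_w(w-Q_hw)\|_T^2=(w-Q_hw,\Delta\varphi)_T-\langle\{w-Q_hw\},\nabla\varphi\cdot\bn\rangle_\pT+\langle\{\nabla(w-Q_hw)\}\cdot\bn,\varphi\rangle_\pT,
\]
and then bounds each term using inverse/trace inequalities on the polynomial $\varphi$ together with the approximation properties of $Q_h$. This route never produces the $\Delta(w-Q_hw)$ term or the jumps $[\,\cdot\,]$ that appear in $\|\cdot\|_{2,h}$, so the bookkeeping about $[w]=0$ and $[\nabla w]=0$ that you describe is unnecessary. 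Your approach has the advantage of recycling (\ref{happy}) once its applicability is justified; the paper's approach is a shade more self-contained. The two are otherwise the same estimate, packaged before versus after one integration by parts.
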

\begin{proof}
For any $T\in\T_h$, it follows from (\ref{wl}), integration  by parts, (\ref{trace}) and inverse inequality that for $w\in P_j(T)$,
\begin{eqnarray*}
\|\Delta_w(w-Q_hw)\|_T^2&=&(\Delta_w(w-Q_hw), \Delta_w(w-Q_hw))_{T}\\
 &=&(w-Q_hw, \Delta (\Delta_w(w-Q_hw)))_{T}-\l \{w-Q_hw\}, \nabla (\Delta_w(w-Q_hw))\cdot\bn\r_\pT\\
&+&\l \{\nabla w-\nabla Q_hw\}\cdot\bn, \Delta_w(w-Q_hw)\r_{\pT}\\
&\le&C(h_T^{-2}\|w-Q_hw\|_T+h_T^{-3/2}\|w-Q_hw\|_\pT\\
&+&h_T^{-1/2}\|\nabla w-\nabla Q_hw\|_\pT)\|\Delta_w(w-Q_hw)\|_T\\
&\le& Ch^{k-1}|w|_{k+1, T}\|\Delta_w(w-Q_hw)\|_T,
\end{eqnarray*}
which implies
\begin{eqnarray*}
\|\Delta_w(w-Q_hw)\|_T &\le& Ch^{k-1}|w|_{k+1, T}.
\end{eqnarray*}
Taking the summation over $T\in\T_h$, we have proved the lemma.
\end{proof}

\begin{theorem} Let $u_h\in V_h$  be the finite element solution arising from
(\ref{wg}). Assume that the exact solution $u\in H^{\max\{k+1,4\}}(\Omega)$. Then, there
exists a constant $C$ such that
\begin{equation}\label{err1}
\3bar u-u_h\3bar \le Ch^{k-1}\left(\|u\|_{k+1}+h\delta_{k,2}\|u\|_{4}\right).
\end{equation}
\end{theorem}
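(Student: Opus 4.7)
The plan is the standard coercivity-plus-consistency argument enabled by the preceding lemmas. First I would introduce the $L^2$-projection splitting
\[
u - u_h = (u - Q_h u) + (Q_h u - u_h),
\]
and apply the triangle inequality for $\3bar\cdot\3bar$:
\[
\3bar u - u_h\3bar \;\le\; \3bar u - Q_h u\3bar + \3bar Q_h u - u_h\3bar.
\]
The first term is already controlled by the approximation estimate (\ref{eee2}), contributing $Ch^{k-1}|u|_{k+1}$, so everything reduces to bounding the discrete piece $\xi_h := Q_h u - u_h \in V_h$.

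Next I would exploit the error equation (\ref{ee}). Since $(\Delta_w(u - u_h),\Delta_w v)_{\T_h} = \ell_1(u,v) + \ell_2(u,v)$, rewriting $u - u_h = (u - Q_h u) - \xi_h$ gives, for every $v\in V_h$,
\[
(\Delta_w \xi_h,\Delta_w v)_{\T_h} = (\Delta_w(Q_h u - u),\Delta_w v)_{\T_h} + \ell_1(u,v) + \ell_2(u,v).
\]
Testing with $v = \xi_h$ turns the left-hand side into $\3bar \xi_h\3bar^2$, the semi-norm I am trying to control.

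For the right-hand side I would estimate the three terms separately. Cauchy--Schwarz together with the projection bound (\ref{eee2}) yields
\[
|(\Delta_w(Q_h u - u),\Delta_w \xi_h)_{\T_h}| \;\le\; \3bar Q_h u - u\3bar\,\3bar \xi_h\3bar \;\le\; Ch^{k-1}|u|_{k+1}\3bar \xi_h\3bar,
\]
while the two consistency bounds (\ref{mm1}) and (\ref{mm2}) from Lemma~5.2 give
\[
|\ell_1(u,\xi_h)| + |\ell_2(u,\xi_h)| \;\le\; Ch^{k-1}\bigl(\|u\|_{k+1} + h\delta_{k,2}\|u\|_{4}\bigr)\3bar \xi_h\3bar.
\]
Dividing through by $\3bar \xi_h\3bar$ produces the desired bound on $\3bar \xi_h\3bar$, and combining with the triangle inequality finishes the proof.

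There is no serious obstacle here: all three key ingredients (the error equation, the dual bounds on $\ell_1,\ell_2$, and the projection estimate $\3bar u - Q_h u\3bar \lesssim h^{k-1}|u|_{k+1}$) have already been established, so the argument is a clean Galerkin-orthogonality style estimate. The only point requiring any care is the bookkeeping of the sign/decomposition in the error equation, namely making sure that testing with $v = \xi_h = Q_h u - u_h$ (rather than $u_h - Q_h u$) produces the correct identity $\3bar \xi_h\3bar^2 = \cdots$ with the projection-error term appearing on the right-hand side, where it can be absorbed by Cauchy--Schwarz. The $h\delta_{k,2}\|u\|_4$ term is carried along exactly as it enters the estimate on $\ell_1$.
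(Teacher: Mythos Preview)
Your proposal is correct and follows essentially the same strategy as the paper: both use the splitting $u-u_h=(u-Q_hu)+(Q_hu-u_h)$, invoke the error equation (\ref{ee}) with the discrete test function $Q_hu-u_h$, and close the argument via (\ref{eee2}) and (\ref{mm1})--(\ref{mm2}). The only cosmetic difference is that the paper bounds $\3bar e_h\3bar^2$ directly and absorbs terms with Young's inequality, whereas you apply the triangle inequality first and then divide through by $\3bar \xi_h\3bar$; the ingredients and logic are identical.
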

\begin{proof}
Let $\epsilon_h=Q_hu-u_h$. Then it is straightforward to obtain
\begin{eqnarray}
\3bar e_h\3bar^2&=&(\Delta_we_h, \Delta_we_h)_{\T_h}\label{eee1}\\
&=&(\Delta_we_h, \Delta_w(u-u_h))_{\T_h}\nonumber\\
&=&(\Delta_we_h,\Delta_w(Q_hu-u_h))_{\T_h}+(\Delta_we_h, \Delta_w(u-Q_hu))_{\T_h}\nonumber\\
&=&(\Delta_we_h,\Delta_w\epsilon_h)_{\T_h}+(\Delta_we_h, \Delta_w(u-Q_hu))_{\T_h}.\nonumber
\end{eqnarray}
We will bound the term  $(\Delta_we_h,\Delta_w\epsilon_h)_{\T_h}$ on right hand side of (\ref{eee1}) first.
Letting $v=\epsilon_h\in V_h$ in (\ref{ee})  and using (\ref{mm1})-(\ref{mm2}) and (\ref{eee2}), we have
\begin{eqnarray}
|(\Delta_we_h,\Delta_w\epsilon_h)_{\T_h}|&\le&|\ell_1(u,\epsilon_h)|+|\ell_2(u,\epsilon_h)|\nonumber\\
&\le& Ch^{k-1}(\|u\|_{k+1}+h\delta_{k,2}\|u\|_{4})\3bar \epsilon_h\3bar\nonumber\\
&\le& Ch^{k-1}(\|u\|_{k+1}+h\delta_{k,2}\|u\|_{4})(\3bar u-Q_hu\3bar+\3bar u-u_h\3bar)\nonumber\\
&\le& Ch^{2(k-1)}(\|u\|^2_{k+1}+h^2\delta_{k,2}\|u\|^2_{4})+\frac14 \3bare_h\3bar^2.\label{eee3}
\end{eqnarray}
To bound the second term on right hand side of (\ref{eee1}), we have by (\ref{eee2}),
\begin{eqnarray}
|(\Delta_we_h, \Delta_w(u-Q_hu))_{\T_h}|&\le& C\3bar u-Q_hu\3bar \3bar e_h\3bar\nonumber\\
&\le& Ch^{2(k-1)}\|u\|^2_{k+1}+\frac14\3bar e_h\3bar^2.\label{eee4}
\end{eqnarray}
Combining the estimates (\ref{eee3}) and  (\ref{eee4}) with (\ref{eee1}), we arrive
\[
\3bar e_h\3bar \le Ch^{k-1}\left(\|u\|_{k+1}+h\delta_{k,2}\|u\|_{4}\right),
\]
which completes the proof.
\end{proof}

\section{Error Estimates in $L^2$ Norm}

In this section, we will obtain an error bound for the finite element solution $u_h$ in
$L^2$ norm.


The dual problem considered has the following form,
\begin{eqnarray}
\Delta^2w&=& e_h\quad
\mbox{in}\;\Omega,\label{dual}\\
w&=&0\quad\mbox{on}\;\partial\Omega,\label{dual1}\\
\nabla w\cdot\bn&=&0\quad\mbox{on}\;\partial\Omega.\label{dual2}
\end{eqnarray}
Assume that the $H^{4}$ regularity holds,
\begin{equation}\label{reg}
\|w\|_4\le C\|e_h\|.
\end{equation}

\begin{theorem}
Let $u_h\in V_h$ be the finite element solution
arising from (\ref{wg}). Assume that the exact solution $u\in H^{k+1}(\Omega)$ and (\ref{reg}) holds true.
 Then, there exists a constant $C$ such that
\begin{equation}\label{err2}
\|u-u_h\| \le Ch^{k+1-\delta_{k,2}}(\|u\|_{k+1}+ h\delta_{k,2}\|u\|_4).
\end{equation}
\end{theorem}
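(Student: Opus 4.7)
The plan is a standard Aubin--Nitsche duality argument. Let $w\in H^4(\Omega)$ solve the dual problem (\ref{dual})--(\ref{dual2}), so by (\ref{reg}), $\|w\|_4 \le C\|e_h\|$. My first step is to derive an identity linking $\|e_h\|^2$ to quantities involving $\Delta_w w$. Integrating $(e_h,\Delta^2 w)_{\T_h}$ by parts twice on each element and exploiting the continuity of $\Delta w$ and $\nabla\Delta w$ together with the homogeneous boundary data for $w$ to eliminate the averaged contributions, then using the identity $(\bbQ_h\Delta w,\Delta e_h)_T = (\Delta w,\Delta e_h)_T - (\Delta w-\bbQ_h\Delta w,\Delta u-\bbQ_h\Delta u)_T$ (which exploits $\Delta u_h\in P_{k-2}\subset P_j$ and self-adjointness of $\bbQ_h$) together with the definition of $\Delta_w$, I expect to obtain
\begin{equation*}
\|e_h\|^2 = (\Delta_w w,\Delta_w e_h)_{\T_h} + (\Delta u-\bbQ_h\Delta u,\ \Delta w-\bbQ_h\Delta w)_{\T_h} - \ell_1(w,e_h) - \ell_2(w,e_h).
\end{equation*}

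Next I decompose $\Delta_w w = \Delta_w Q_h w + \Delta_w(w-Q_h w)$. Since $Q_h w\in V_h$, the error equation (\ref{ee}) gives $(\Delta_w e_h,\Delta_w Q_h w)_{\T_h} = \ell_1(u,Q_h w) + \ell_2(u,Q_h w)$. A direct inspection of the definitions of $\ell_1, \ell_2$ shows that $\ell_1(u,w) = \ell_2(u,w) = 0$: the smoothness of $w$ kills $w-\{w\}$ and $(\nabla w-\{\nabla w\})\cdot\bn$ on interior edges, while the boundary conditions $w=0$, $\nabla w\cdot\bn=0$ kill them on boundary edges. Hence $\ell_i(u,Q_h w) = \ell_i(u,Q_h w - w)$. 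The remaining piece $(\Delta_w(w-Q_h w),\Delta_w e_h)_{\T_h}$ is controlled by Cauchy--Schwarz together with the $H^2$ estimate (\ref{err1}).

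It then remains to bound each contribution. The four terms $|\ell_1(u,Q_h w-w)|$, $|\ell_2(u,Q_h w-w)|$, $|\ell_1(w,e_h)|$, $|\ell_2(w,e_h)|$ are each estimated by Cauchy--Schwarz, using Lemma \ref{l2} on the smooth factor and either standard $L^2$-projection plus trace estimates for the jump factor $Q_h w - w$, or the norm equivalence (\ref{happy}) for the jump factor involving $e_h$ (the latter uses that $u$ is continuous and vanishes on $\partial\Omega$, so the jumps of $e_h$ equal those of $-u_h$). The ``projection--projection'' term $(\Delta u-\bbQ_h\Delta u,\Delta w-\bbQ_h\Delta w)_{\T_h}$ is bounded by $\|\Delta u-\bbQ_h\Delta u\|\cdot\|\Delta w-\bbQ_h\Delta w\|$, which gives at most $Ch^{k+1}|u|_{k+1}\|w\|_4$ using $\Delta u\in H^{k-1}$ and $\Delta w\in H^2$.

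The main obstacle is that the dual solution has only $H^4$ regularity rather than $H^{k+1}$. Consequently, the analogues of Lemma \ref{l2} and (\ref{eee2}) applied to $w$ yield at best $h^{\min(k-1,2)}\|w\|_4 = h^{2-\delta_{k,2}}\|w\|_4$, instead of $h^{k-1}\|w\|_{k+1}$, and this one-order loss in the lowest-order case $k=2$ is exactly the source of the $\delta_{k,2}$ factor in the statement. Assembling all the bounds yields $\|e_h\|^2 \le Ch^{k+1-\delta_{k,2}}\bigl(\|u\|_{k+1} + h\delta_{k,2}\|u\|_4\bigr)\|w\|_4$; dividing by $\|e_h\|$ and applying $\|w\|_4 \le C\|e_h\|$ completes the argument.
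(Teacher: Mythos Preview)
Your overall strategy matches the paper's Aubin--Nitsche argument: integrate by parts to get an identity for $\|e_h\|^2$ involving $(\Delta_w w,\Delta_w e_h)_{\T_h}$, split $\Delta_w w=\Delta_w Q_hw+\Delta_w(w-Q_hw)$, and invoke the error equation (\ref{ee}) on the first piece. Your replacement of the paper's term $(\Delta w-\bbQ_h\Delta w,\Delta e_h)_{\T_h}$ by $(\Delta u-\bbQ_h\Delta u,\Delta w-\bbQ_h\Delta w)_{\T_h}$, via the orthogonality $(\mathrm{Id}-\bbQ_h)\perp\Delta u_h\in P_{k-2}\subset P_j$, is a genuine simplification: it bypasses the paper's separate estimate of $\sum_T\|\Delta e_h\|_T^2$.

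There is, however, a real gap in your treatment of $\ell_1(w,e_h)$ and $\ell_2(w,e_h)$. You propose to control the jump factor $\bigl(\sum_T h_T^{-3}\|[e_h]\|_{\pT}^2\bigr)^{1/2}$ by noting $[e_h]=-[u_h]$ and applying the norm equivalence (\ref{happy}) to $u_h\in V_h$. But that yields only
\[
\Bigl(\sum_T h_T^{-3}\|[u_h]\|_{\pT}^2\Bigr)^{1/2}\le C\,\3bar u_h\3bar,
\]
which is $O(1)$, not $O(h^{k-1})$. Multiplied by the $O(h^2)\|w\|_4$ coming from the $w$-factor, you obtain only $Ch^2\|w\|_4$, which suffices for $k=2$ but fails for every $k\ge 3$. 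The paper closes this by writing $e_h=(u-Q_hu)+\epsilon_h$ with $\epsilon_h=Q_hu-u_h\in V_h$: the jump of $\epsilon_h$ is controlled via (\ref{happy}) by $\3bar\epsilon_h\3bar\le\3bar e_h\3bar+\3bar u-Q_hu\3bar\le Ch^{k-1}(\cdots)$ using (\ref{err1}) and (\ref{eee2}), while the jump of $u-Q_hu$ is handled by standard projection-plus-trace estimates. Insert this split and the rest of your plan goes through.
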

\begin{proof}
Testing (\ref{dual}) by  $e_h$  and using the fact that
$\sum_{T\in\T_h}\langle \nabla (\Delta w)\cdot\bn, \{e_h\}\rangle_\pT=0$ and $\sum_{T\in\T_h}\langle \Delta w, \{\nabla e_h\}\cdot\bn\rangle_\pT=0$ and integration by parts,  we arrive at
\begin{eqnarray*}
\|e_h\|^2&=&(\Delta^2 w, e_h)_{\T_h}\nonumber\\
&=&(\Delta w,\Delta e_h)_{\T_h} -\langle \Delta w,
\nabla e_h\cdot\bn\rangle_{\pT_h} + \langle\nabla(\Delta w)\cdot\bn,
e_h\rangle_{\pT_h}\nonumber\\
&=&(\Delta w,\Delta e_h)_{\T_h} -\langle \Delta w,
(\nabla e_h-\{\nabla e_h\})\cdot\bn\rangle_{\pT_h}\nonumber\\
& +& \langle\nabla(\Delta w)\cdot\bn,
e_h-\{e_h\}\rangle_{\pT_h}.\nonumber\\
&=&(\bbQ_h\Delta w,\Delta e_h)_{\T_h}+(\Delta w-\bbQ_h\Delta w,\Delta e_h)_{\T_h}\\
& -&\langle \Delta w,
(\nabla e_h-\{\nabla e_h\})\cdot\bn\rangle_{\pT_h}
+ \langle\nabla(\Delta w)\cdot\bn,
e_h-\{e_h\}\rangle_{\pT_h}.\nonumber
\end{eqnarray*}
It follows from integration by parts, the definition of weak Laplacian (\ref{wl}) and (\ref{key}),
\begin{eqnarray*}
(\bbQ_h\Delta w, \Delta e_h)_{\T_h}
&=& (e_h, \Delta(\bbQ_h\Delta w))_T + \langle \nabla e_h\cdot\bn,\ \bbQ_h\Delta w\rangle_{\pT}-\langle e_h, \nabla(\bbQ_h\Delta w)\cdot\bn \rangle_{\pT}\nonumber\\
&=&(\Delta_w e_h,\ \bbQ_h\Delta w)_T-\langle e_h-\{e_h\}, \nabla(\bbQ_h\Delta w)\cdot\bn\rangle_{\pT}\\
&+&\langle (\nabla e_h-\{\nabla e_h\})\cdot\bn, \bbQ_h\Delta
w\rangle_{\pT}\\
&=&(\Delta_w w,\ \Delta_w e_h)_T- \langle e_h-\{e_h\}, \nabla(\bbQ_h\Delta w)\cdot\bn\rangle_{\pT}\\
&+&\langle (\nabla e_h-\{\nabla e_h\})\cdot\bn, \bbQ_h\Delta
w\rangle_{\pT}.
\end{eqnarray*}
Combining the two equations above implies
\begin{eqnarray*}
\|e_h\|^2&=&(\Delta_w w,\ \Delta_w e_h)_{\T_h}+(\Delta w-\bbQ_h\Delta w,\Delta e_h)_{\T_h}\\
&+&\ell_1(w,e_h)+\ell_2(w,e_h).
\end{eqnarray*}
By simple manipulation and (\ref{ee}), we have
\begin{eqnarray*}
(\Delta_w w,\ \Delta_w e_h)_{\T_h}&=&(\Delta_w Q_hw,\ \Delta_w e_h)_{\T_h}+(\Delta_w (w-Q_h),\ \Delta_w e_h)_{\T_h}\\
&=&\ell_1(u,Q_hw)+\ell_2(u, Q_hw)+(\Delta_w (w-Q_h),\ \Delta_w e_h)_{\T_h}.
\end{eqnarray*}
Combining the two equations above implies
\begin{eqnarray*}
\|e_h\|^2&=&\ell_1(u,Q_hw)+\ell_2(u, Q_hw)+(\Delta_w e_h,\ \Delta_w (w-Q_hw))_{\T_h}\\
&+&(\Delta w-\bbQ_h\Delta w,\Delta e_h)_{\T_h}
+\ell_1(w,\epsilon_h)+\ell_2(w,\epsilon_h)\\
&=&I_1+I_2+I_3+I_4+I_5+I_6.
\end{eqnarray*}

Next, we will estimate all the terms on the right hand side of the above equation.
Using the Cauchy-Schwartz inequality, (\ref{mmm1})-(\ref{mmm2}), (\ref{jp}) and (\ref{ell-1}), we have
\begin{eqnarray*}
I_1&=&\ell_1(u,Q_hw)=\left|\sum_{T\in\T_h}\langle \nabla(\Delta u-\bbQ_h\Delta u)\cdot\bn, Q_hw-\{Q_hw\}\rangle_\pT\right| \\
\le && \left(\sum_{T\in\T_h}h^3_T\|\nabla(\Delta u-\bbQ_h\Delta
u)\|_\pT^2\right)^{\frac12}
\left(\sum_{T\in\T_h}h_T^{-3}\|[Q_hw]\|^2_{\pT}\right)^{\frac12}\\
\le && \left(\sum_{T\in\T_h}h^3_T\|\nabla(\Delta u-\bbQ_h\Delta
u)\|_\pT^2\right)^{\frac12}
\left(\sum_{T\in\T_h}h_T^{-3}\|[Q_hw-w]\|^2_{\pT}\right)^{\frac12}\\
\le &&  C h^{k+1-\delta_{k,2}}\left(\|u\|_{k+1}+h\delta_{k,2}\|u\|_{4}\right)\|w\|_4,
\end{eqnarray*}
Similarly, by the Cauchy-Schwartz inequality, (\ref{mmm1})-(\ref{mmm2}), (\ref{jp1}) and (\ref{ell-2}), we have
\begin{eqnarray*}
I_2&=&\ell_2(u,Q_hw)=\left|\sum_{T\in\T_h} \langle \Delta u-\bbQ_h\Delta u, (\nabla Q_hw-\{\nabla Q_hw\})\cdot\bn)\rangle_\pT\right|\nonumber\\
\le &&\left(\sum_{T\in\T_h} h^{-1}_T\|\Delta u-\bbQ_h\Delta
u\|_\pT^2\right)^{\frac12} \left(\sum_{T\in\T_h} h_T
\|[\nabla Q_hw-\nabla w]\|_\pT^2\right)^{\frac12}\nonumber\\
\le &&  C h^{k+1-\delta_{k,2}}\|u\|_{k+2} \|w\|_4.
\end{eqnarray*}
The estimates (\ref{err1}) and (\ref{eee2}) give
\begin{eqnarray*}
I_3\le  C h^{k+1}\|u\|_{k+1} \|w\|_4.
\end{eqnarray*}

To estimate $I_4$, we need to bound $\|\Delta e_h\|_T$.
By (\ref{happy}), (\ref{eee2}), (\ref{err1}) and the definition of $Q_h$, we have
\begin{eqnarray*}
\sum_{T\in\T_h}\|\Delta e_h\|^2_{T}&\le& \sum_{T\in\T_h}\|\Delta\epsilon_h\|^2_{T}+\sum_{T\in\T_h}\|\Delta(u-Q_hu)\|^2_{T}\\
&\le&C(h^{k-1}\|u\|_{k+1}+\3bar \epsilon_h\3bar^2)\\
&\le&C(h^{k-1}\|u\|_{k+1}+\3bar e_h\3bar^2+\3bar Q_hu-u\3bar^2)\\
&\le&Ch^{k-1}\|u\|_{k+1}.
\end{eqnarray*}

The above estimate and the definition of $\bbQ_h$ imply
\begin{eqnarray*}
I_4 &\le& C h^{k+1}\|u\|_{k+1} \|w\|_4.
\end{eqnarray*}
Using the Cauchy-Schwartz inequality, (\ref{jp}), (\ref{mmm1}), (\ref{happy}), (\ref{eee2}) and (\ref{err1}), we have
\begin{eqnarray*}
I_5&=&\ell_1(w,e_h)=\left|\sum_{T\in\T_h}\langle \nabla(\Delta w-\bbQ_h\Delta
w)\cdot\bn, e_h-\{e_h\}\rangle_\pT\right|\\
&\le&  \left(\sum_{T\in\T_h}h_T^3\|\nabla(\Delta w-\bbQ_h\Delta
w)\|_\pT^2\right)^{\frac12}
\left(\sum_{T\in\T_h}h_T^{-3}\|e_h-\{e_h\}\|^2_{\pT}\right)^{\frac12}\\
&\le&  \left(\sum_{T\in\T_h}h_T^3\|\nabla(\Delta w-\bbQ_h\Delta
w)\|_\pT^2\right)^{\frac12}
\left(\sum_{T\in\T_h}h_T^{-3}\|[e_h]\|^2_{\pT}\right)^{\frac12}\\
&\le& C h^2\|w\|_4\left(\3bar \epsilon_h\3bar+(\sum_{T\in\T_h}h_T^{-3}\|[Q_hu-u]\|^2_{\pT})^{\frac12}\right)\\
&\le& C h^2\|w\|_4\left(\3bar e_h\3bar+\3bar Q_hu-u\3bar+(\sum_{T\in\T_h}h_T^{-3}\|[Q_hu-u]\|^2_{\pT})^{\frac12}\right)\\
&\le&C h^{k+1}\|u\|_{k+1} \|w\|_4.
\end{eqnarray*}
Similarly, we obtain
\begin{eqnarray*}
I_6\le C h^{k+1}\|u\|_{k+1} \|w\|_4.
\end{eqnarray*}
Combining all the estimates above yields
$$
\|e_h\|^2 \leq C h^{k+1-\delta_{k,2}}\|u\|_{k+1} \|w\|_4.
$$
It follows from the above inequality and
the regularity assumption (\ref{reg}).
 $$
\|e_h\|\leq C h^{k+1-\delta_{k,2}}\|u\|_{k+1}.
$$
We have completed the proof.
\end{proof}

\section{Numerical Experiments}\label{Section:numerical-results}

\long\def\a#1{\begin{align*}#1\end{align*}}\def\b#1{{\mathbf #1}}

 We solve the following biharmonic equation by $P_k$ conforming DG finite element methods:
\begin{align} \label{s1}  \Delta^2 u =f  ,  \quad (x,y)\in\Omega=(0,1)^2,
\end{align} with the boundary conditions $u=g_1$ and $\nabla u\cdot \b n=g_2$ on $\partial \Omega$.
We choose $f$, $g_1$ and $g_2$  so that the exact solution is
\a{ u=e^{x+y}. }

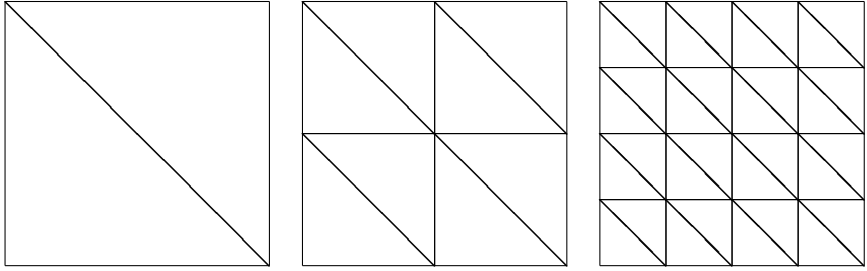
\begin{figure}[h!]
 \begin{center} \setlength\unitlength{1.25pt}
\begin{picture}(260,80)(0,0)
  \def\tr{\begin{picture}(20,20)(0,0)\put(0,0){\line(1,0){20}}\put(0,20){\line(1,0){20}}
          \put(0,0){\line(0,1){20}} \put(20,0){\line(0,1){20}}
   \put(0,20){\line(1,-1){20}}  \put(20,0){\line(-1,1){20}}\end{picture}}
 {\setlength\unitlength{5pt}
 \multiput(0,0)(20,0){1}{\multiput(0,0)(0,20){1}{\tr}}}

  {\setlength\unitlength{2.5pt}
 \multiput(45,0)(20,0){2}{\multiput(0,0)(0,20){2}{\tr}}}

  \multiput(180,0)(20,0){4}{\multiput(0,0)(0,20){4}{\tr}}

 \end{picture}\end{center}\label{grid1}
\caption{The first three levels of grids used in the computation of Table \ref{t1}. }
\end{figure}

In the first computation, the first three levels of grids are plotted in Figure \ref{grid1}.
The error and the order of convergence for the  method are listed in Tables \ref{t1}.
Here on triangular grids,  we compute the  weak
  Laplacian $\Delta_w v$ by $P_{k+2}$ polynomials.
The numerical results confirm the convergence theory.

\begin{table}[h!]
  \centering \renewcommand{\arraystretch}{1.1}
  \caption{The error and the order of convergence for \eqref{s1} on triangular grids (Figure \ref{grid1}) }\label{t1}
\begin{tabular}{c|cc|cc|cc}
\hline
level & $\|u_h-  u\|_0 $  &rate & $ |u_h-u|_{1,h} $ &rate  & $\3bar u_h- u\3bar $ &rate    \\
\hline
 &\multicolumn{6}{c}{by the $P_2$ conforming DG finite element } \\ \hline
 4&   0.3653E-03 &  2.0&   0.3281E-02 &  1.9&   0.1229E+01 &  0.9 \\
 5&   0.9566E-04 &  1.9&   0.8733E-03 &  1.9&   0.6312E+00 &  1.0 \\
 6&   0.2480E-04 &  1.9&   0.2268E-03 &  1.9&   0.3199E+00 &  1.0 \\
 \hline
 &\multicolumn{6}{c}{by the $P_3$ conforming DG finite element } \\ \hline
 2&   0.2291E-03 &  4.4&   0.3275E-02 &  3.1&   0.1612E+00 &  2.0 \\
 3&   0.1143E-04 &  4.3&   0.3889E-03 &  3.1&   0.4577E-01 &  1.8 \\
 4&   0.7148E-06 &  4.0&   0.4743E-04 &  3.0&   0.1243E-01 &  1.9 \\ 
 \hline
\end{tabular}%
\end{table}%

In the next computation,  we use a family of polygonal grids (with pentagons and of 8-side polygons)
   shown in Figure \ref{5g}.
We let the polynomial degree $j=k+3$ for the weak Laplacian on such polygonal meshes.
The rate of convergence is listed in Table \ref{t2}.
The convergence history confirms the theory.

\begin{figure}[htb]\begin{center}\setlength\unitlength{1.5in}
    \begin{picture}(3.2,1.4)
 \put(0,0){\includegraphics[width=1.5in]{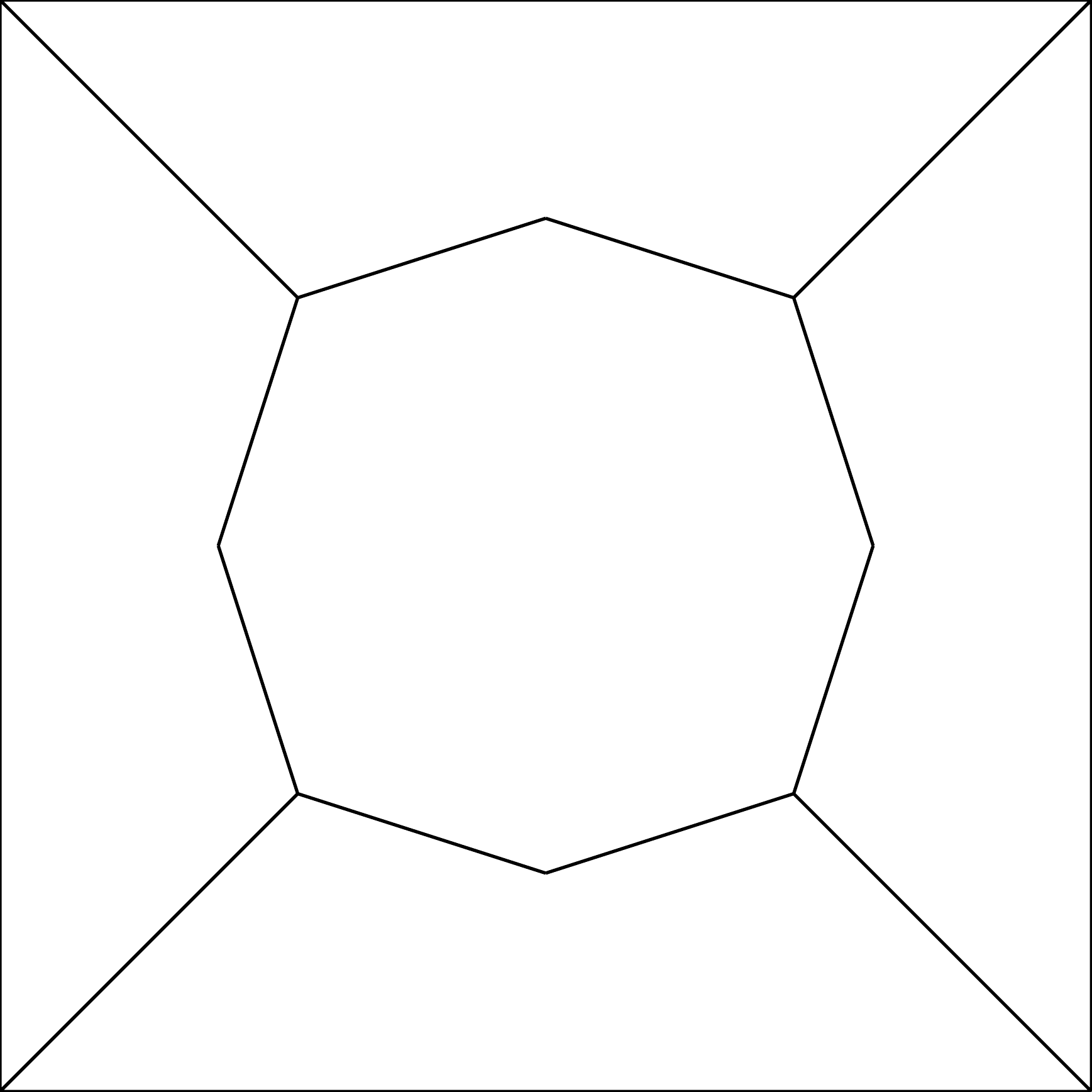}}  
 \put(1.1,0){\includegraphics[width=1.5in]{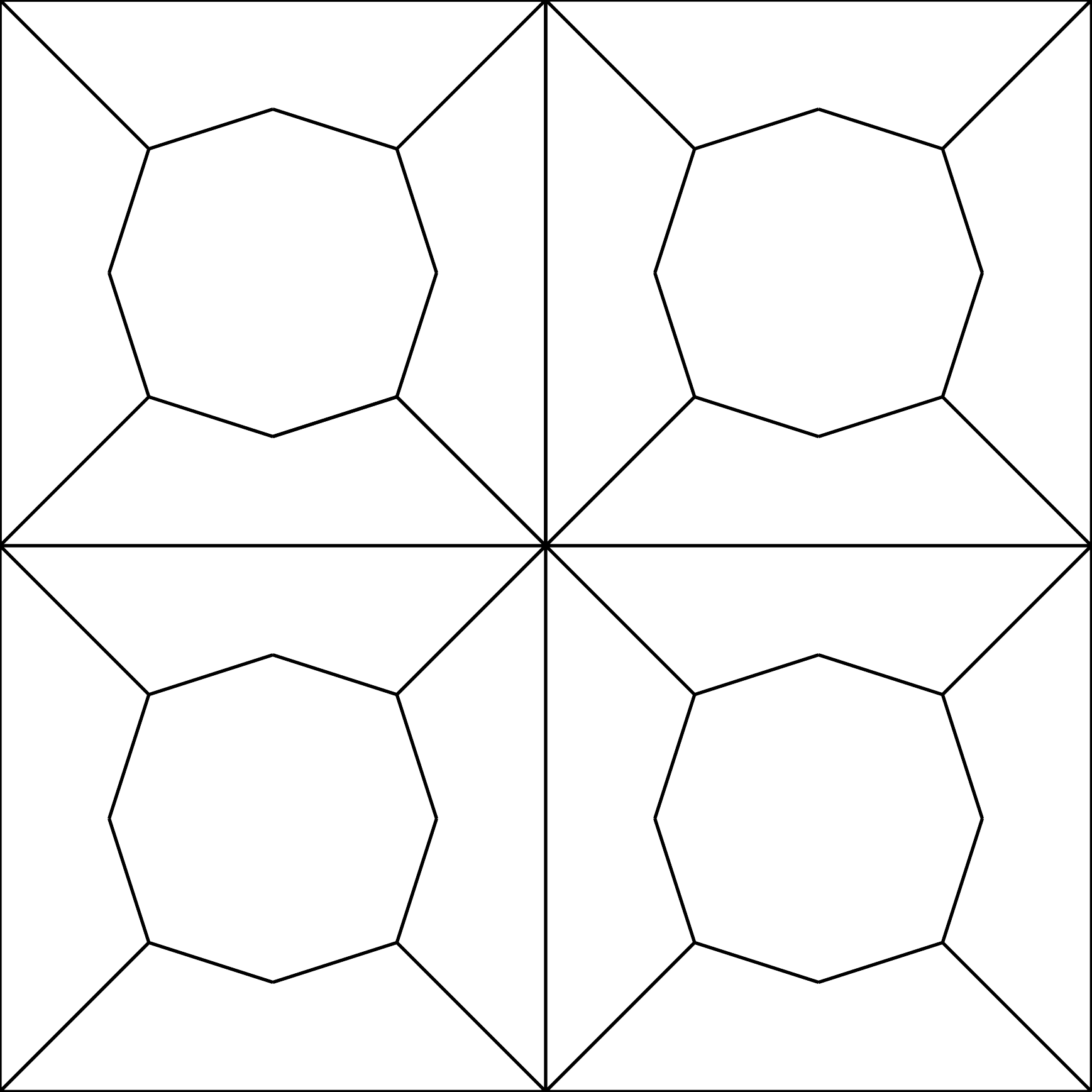}}
 \put(2.2,0){\includegraphics[width=1.5in]{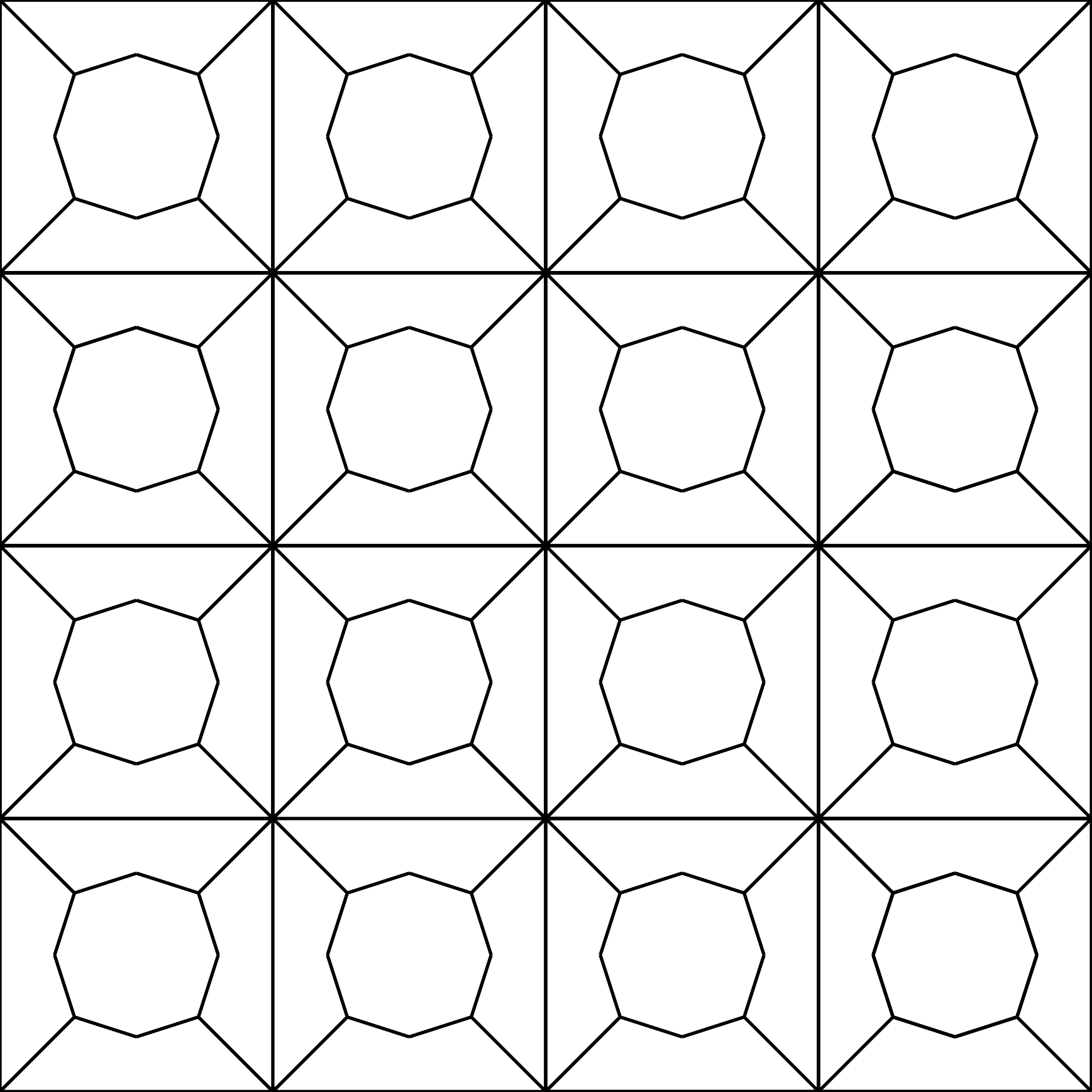}}
    \end{picture}
\caption{ The first three polygonal grids for the computation of Table \ref{t2}.  } \label{5g}
\end{center}
\end{figure}

\begin{table}[h!]
  \centering \renewcommand{\arraystretch}{1.1}
  \caption{Error profiles and convergence rates for \eqref{s1}
        on polygonal grids (Figure \ref{5g}) }\label{t2}
\begin{tabular}{c|cc|cc|cc}
\hline
level & $\|u_h-  u\|_0 $  &rate & $ |u_h-u|_{1,h} $ &rate  & $\3bar u_h- u\3bar $ &rate    \\
\hline
 &\multicolumn{6}{c}{by the $P_2$ conforming DG finite element } \\ \hline
 4&   0.3171E-03 &  1.9&   0.4537E-02 &  1.9&   0.3286E+01 &  1.0 \\
 5&   0.8671E-04 &  1.9&   0.1175E-02 &  1.9&   0.1647E+01 &  1.0 \\
 6&   0.2428E-04 &  1.8&   0.3023E-03 &  2.0&   0.8243E+00 &  1.0 \\ \hline
 &\multicolumn{6}{c}{by the $P_3$ conforming DG finite element } \\ \hline
 1&   0.3402E-02 &  0.0&   0.3868E-01 &  0.0&   0.3702E+01 &  0.0 \\
 2&   0.2027E-03 &  4.1&   0.4895E-02 &  3.0&   0.9408E+00 &  2.0 \\
 3&   0.1476E-04 &  3.8&   0.6244E-03 &  3.0&   0.2368E+00 &  2.0 \\
 \hline
\end{tabular}%
\end{table}%

\end{document}